\theoremstyle{plain}
\newtheorem{Theorem}{Theorem}[section]		
\newtheorem{Lemma}[Theorem]{Lemma}
\newtheorem{Corollary}[Theorem]{Corollary}
\newtheorem{proposition}[Theorem]{Proposition}
\theoremstyle{definition}
\theoremstyle{remark}
\newtheorem{remark}[Theorem]{Remark}
\newtheorem{example}[Theorem]{Example}
\numberwithin{equation}{section}
\DeclareMathOperator*{\colim}{colim}
\newcommand{\Mod}{\operatorname{\mathsf{Mod}}\nolimits}
\newcommand{\mmod}{\operatorname{\mathsf{mod}}\nolimits}
\newcommand{\End}{\operatorname{End}\nolimits}
\newcommand{\Hom}{\operatorname{Hom}\nolimits}
\newcommand{\M}{\operatorname{M}\nolimits}
\newcommand{\Fun}{\operatorname{Fun}\nolimits}
\newcommand{\Rep}{\operatorname{\mathsf{Rep}}\nolimits}
\newcommand{\add}{\operatorname{\mathsf{add}}\nolimits}
\newcommand{\op}{\mathrm{op}}
\newcommand{\ch}{\operatorname{ch}\nolimits}
\newcommand{\diag}{\operatorname{diag}\nolimits}
\newcommand{\sgn}{\operatorname{sgn}\nolimits}
\newcommand{\trace}{\mbox{trace}}
\def\frS{{\mathfrak S}}
\def\C{{\mathcal C}}
\def\M{{\mathcal M}}
\def\F{{\mathcal F}}
\def\bbZ{{\mathbb Z}}
\def\bbN{{\mathbb N}}
\def\bbQ{{\mathbb Q}}
\def\P{{\mathsf P}}
\def\Mk{{\mathsf M}_k}
\def\Sp{{\mathrm {Sp}}}
\def\i{{\underline{i}}}
\def\j{\underline{j}}
\title{The monoidal structure on strict polynomial functors}
\author{Cosima Aquilino and Rebecca Reischuk}
\date{}
\begin{document}
\maketitle

\begin{abstract}
The category of strict polynomial functors inherits an internal tensor product from the category of divided powers.
To investigate this monoidal structure, we consider the category of representations of the symmetric group $\frS_d$
which admits a tensor product coming from its Hopf algebra structure. It is classical that there exists a functor $\mathcal{F}$ from the category of strict polynomial functors
to the category of representations of the symmetric group. Our main result is that this functor $\mathcal{F}$ is monoidal. In addition we study the relations under $\F$ between projective strict
polynomial functors and permutation modules and the link to symmetric functions.
\end{abstract}

\section{Introduction}
Strict polynomial functors were first defined by Friedlander and Suslin in \cite{FS1997}, using polynomial maps of finite dimensional vector spaces over a field $k$.
They showed that the category of strict polynomial functors of a fixed degree $d$ is equivalent to the category of modules over the Schur algebra $S_k(n,d)$ whenever $n\geq d$.
These algebras, named after Issai Schur and originally used  to describe the polynomial representations of the general linear group,
have been extensively investigated.

Another description of strict polynomial functors, namely defining them as $k$-linear representations of the category of divided powers,
provides an important structure on the category of strict polynomial functors and hence on modules over the Schur algebra: The tensor product on the category of divided powers induces in 
a natural way an internal tensor product in the category of strict polynomial functors (defined in \cite{Kr2013}). 

In order to describe this tensor product more explicitly we make use of the monoidal structure on the category of representations of the symmetric group.
In fact, there exists a functor $\F$, sometimes called the Schur functor, from the category of strict polynomial functors to the category of representations of the symmetric group
which allows us to compare these monoidal structures. It turns out that the monoidal structure is preserved under the functor $\F$. This yields explicit formulae for the tensor product of certain polynomial functors.

We work over an arbitrary commutative ring $k$.
We start by recalling some basic definitions concerning strict polynomial functors and the description of the internal tensor product given in \cite{Kr2013}.
In the second section we focus our attention on representations of the symmetric group $\frS_d$. In particular, we consider the $k\frS_d$-module structure on the $d$-th tensor power of a free $k$-module $E$ and its
decomposition into permutation modules. We calculate the tensor product of permutation modules and its decomposition.

In the fourth section we show that the functor $\F$ maps certain important projective objects in the category of strict polynomial functors
to the permutation modules. For this an essential ingredient is the parametrization of morphisms of these objects given by Totaro \cite{To1997}.
Finally we  prove that $\F$ is a monoidal functor, see Theorem \ref{mainresult}.

In the last section we assume that $k$ is a field of characteristic $0$. In this case, the functor $\F$ induces an equivalence between strict polynomial functors and representations of the symmetric group. Moreover
we explain and use the connection to symmetric functions.

\section{Strict polynomial functors}
In the following we briefly recall the definition of strict polynomial functors and of the internal tensor product as described in \cite{Kr2013}.
Let $k$ be a commutative ring and denote by $\P_k$ the category of finitely generated projective $k$-modules. 
For $d\in\bbN$ and $V\in\P_k$ denote by $\Gamma^d V$ the $\frS_d$-invariant part of $V^{\otimes d}$ where the (right) action of $\frS_d$ is given by permuting the factors.
Sending a module $V$ to $\Gamma^d V$ yields a functor $\Gamma^d:\P_k\to\P_k$.

We define the \emph{category of degree $d$ divided powers $\Gamma^d\P_k$} which has the same objects as $\P_k$ and where the morphisms between two objects $V$ and $W$ are given by
\[\Hom_{\Gamma^d\P_k}(V,W):=\Gamma^d\Hom(V,W)=(\Hom(V,W)^{\otimes d})^{\frS_d}.\]

This can be identified with $\Hom(V^{\otimes d},W^{\otimes d})^{\frS_d}$ where for $\sigma\in\frS_d$, $f\in\Hom(V^{\otimes d},W^{\otimes d})$ and $v_i\in V$ the action is given by
\[f\sigma  (v_1\otimes\cdots\otimes v_d):=f((v_1\otimes\cdots\otimes v_d)\sigma^{-1})\sigma=f(v_{\sigma^{-1}(1)}\otimes\cdots\otimes v_{\sigma^{-1}(d)})\sigma.\]
In other words, the set of morphisms $\Hom_{\Gamma^d\P_k}(V,W)$ is isomorphic to the set of $\frS_d$-equivariant morphisms from $V^{\otimes d}$ to $W^{\otimes d}$. 

The \textit{category of strict polynomial functors} is the  category of $k$-linear representations of $\Gamma^d\P_k$
\[\Rep\Gamma^d_k=\Fun_k(\Gamma^d\P_k,\Mk),\]
where $\Mk$ denotes the category of $k$-modules. The morphisms between two strict polynomial functors $X,Y$ are denoted by $\Hom_{\Gamma^d_k}(X,Y).$

\paragraph*{Representable functors}
The strict polynomial functor \emph{represented by the module $V\in\Gamma^d\P_k$} is given by
\[\Gamma^{d,V}:=\Hom_{\Gamma^d\P_k}(V,-).\]
Via the Yoneda embedding the category $(\Gamma^d\P_k)^{\op}$ can be identified with the full subcategory of $\Rep\Gamma^d_k$ consisting of all representable functors.

\paragraph*{External tensor product}
For non-negative integers $d,e$ and $X\in\Rep\Gamma^d_k$ and $Y\in\Rep\Gamma^e_k$ we can form the external tensor product 
\begin{align}\label{exttensor}
 X\otimes Y\in \Rep\Gamma^{d+e}.
\end{align}
 It is given on objects by $(X\otimes Y)(V)=X(V)\otimes Y(V)$ and on morphisms via the map
\[\Gamma^{d+e}\Hom(V,W)\to\Gamma^d\Hom(V,W)\otimes\Gamma^e\Hom(V,W).\]

In particular, for positive integers $n,d$ and a composition $\lambda=(\lambda_1,\lambda_2,\dots,\lambda_n)$ of $d$ in $n$ parts, 
i.e.\ an $n$-tuple of non negative integers such that $\sum_i \lambda_i = d$, we can form representable functors 
${\Gamma^{\lambda_1,k}\in\Rep\Gamma^{\lambda_1}_k},\dots,{\Gamma^{\lambda_n,k}\in\Rep\Gamma^{\lambda_n}_k}$
and  take their tensor product to obtain a functor in $\Rep\Gamma^d_k$
\begin{align*} \Gamma^\lambda=\Gamma^{\lambda_1}\otimes\cdots\otimes\Gamma^{\lambda_n}. \end{align*}

There is (cf \cite[(2.8)]{Kr2013}) a canonical decomposition of the functor represented by $k^n$ 
\begin{align}\label{can.dec}
\Gamma^{d,k^n}=\bigoplus_{\lambda\in\Lambda(n,d)}\Gamma^\lambda,
\end{align}
where $ \Lambda(n,d)$ denotes the set of compositions of $d$ in $n$ parts.

\paragraph*{Colimits of representable functors}
We will use the fact that every object in $\Rep\Gamma^d_k$ is a colimit of representable functors. This is an analogue of a free presentation of a module over a ring, see \cite[III.7]{ML1997}.
In our situation this can be done as follows.
Let $X$  be an object in $\Rep\Gamma^d_k$ and $V\in\Gamma^d\P_k$. By the Yoneda lemma, every element $v\in X(V)$ corresponds to a natural transformation $F_v:\Gamma^{d,V}\rightarrow~X$.
Let $\C_X=\{F_v:\Gamma^{d,V}\rightarrow X \ | \ ~V\in\Gamma^d\P_k, \ v \in X(V) \}$  be the category whose objects are natural transformations $F_v$ from representable functors $\Gamma^{d,V}$ to $X$, where $V$ runs through all elements in $\Gamma^d\P_k$,
 and where a morphism between $F_v$ and $F_w$, with $v \in X(V), \ w \in X(W)$, is given by a natural transformation $\phi_{v,w}:\Gamma^{d,V}\rightarrow \Gamma^{d,W}$ such that $F_v=F_w\circ \phi_{v,w}$.
Define $\F_X:\C_X\rightarrow \Rep\Gamma^d_k$ to be the functor sending a natural transformation $F_v$ to its domain, the representable functor $\Gamma^{d,V}$.
Then $X=\colim \F_X$.

\paragraph*{Representations of Schur algebras}
For $n,d$ positive integers, the Schur algebra can be defined as 
\[S_k(n,d)=\End_{\frS_d}((k^n)^{\otimes d})=\End_{\Gamma^d_k}(\Gamma^{d,k^n})^{\op}.\]
If $n\geq d$ there is an equivalence of categories $\Rep\Gamma^d_k\cong\Mod\End_{\Gamma^d_k}(\Gamma^{d,k^n})= S_k(n,d)\Mod$ (cf. \cite[Theorem 3.2]{FS1997}). 

\subsection*{The tensor product of strict polynomial functors}
For $V, W$ in $\P_k$ denote by $V\otimes_k W$ the usual tensor product of $k$-modules. This induces a tensor product on $\Gamma^d\P_k$, the category of divided powers.
It coincides on objects with the one for $\P_k$ and on morphisms it is given via the following composite:
\[\Gamma^d\Hom(V,V')\times\Gamma^d\Hom(W,W')\rightarrow\Gamma^d(\Hom(V,V')\otimes\Hom(W,W'))\xrightarrow{\sim}\Gamma^d\Hom(V\otimes W,V'\otimes W').\]

This yields an internal tensor product on $\Rep\Gamma^d_k$, namely for representable functors $\Gamma^{d,V}$ and
$\Gamma^{d,W}$ in $\Rep\Gamma^d_k$ set
\[\Gamma^{d,V}\otimes_{\Gamma^d_k}\Gamma^{d,W}:=\Gamma^{d,V\otimes W}.\]

For arbitrary objects $X$ and $Y$ in $\Rep\Gamma^d_k$ define
\begin{align*}
\Gamma^{d,V}\otimes_{\Gamma^d_k}X&:=\colim({\Gamma^{d,V}}\otimes_{\Gamma^d_k}\F_X),\\
X\otimes_{\Gamma^d_k}Y&:=\colim(\F_X\otimes_{\Gamma^d_k}Y),
\end{align*}
where ${\Gamma^{d,V}}\otimes_{\Gamma^d_k}\F_X$ resp.\ $\F_X\otimes_{\Gamma^d_k}Y$ is the functor sending $F_v$ to ${\Gamma^{d,V}}\otimes_{\Gamma^d_k}\F_X(F_v)$ resp.\ $\F_X(F_v)~\otimes_{\Gamma^d_k}~Y$.

The tensor unit is given by \[I_{\Gamma^d_k}:=\Gamma^{d,k}\cong\Gamma^{(d)}.\]

\section{Representations of symmetric groups}\label{SymmetricGroup}

For positive integers $n,d$ let \[I(n,d) := \{ \underline{i} = (i_1\dots i_d) \mid 1\leq i_l\leq n \}\]
be the set of $d$-tuples of positive integers smaller equal than $n$.
Let $\lambda \in \Lambda(n,d) $ be a composition of $d$ in $n$ parts.
We say that $\underline{i}$ belongs to $\lambda$ (and write $\underline{i}\in\lambda$) if $\underline{i}$ has $\lambda_{l}$ entries equal to $l$.

Let $E$ be a free $k$-module of dimension $n$. For a fixed basis $ \{ e_1,\dots,e_n \} $ of $E$ the $d$-th tensor power
$E^{\otimes d}$ has a basis indexed by the set $I(n,d)$. Namely we write $e_{\i}=e_{i_1}\otimes \dots \otimes e_{i_d}$ for $\i=(i_1\dots i_d) \in I(n,d)$.

\subsubsection*{Permutation modules}
Let the symmetric group $\frS_d$ act from the right on $E^{\otimes d}$ by place permutation: 
\[(v_1 \otimes\dots\otimes v_d)\sigma = v_{\sigma(1)}\otimes \dots \otimes v_{\sigma(d)}\quad\text{ for }   \sigma \in \frS_d, \ v_1 \otimes \dots \otimes v_d \in E^{\otimes d}.\]
By linear extension of the action, $E^{\otimes d}$ becomes a $k\frS_d$-module and it decomposes as a direct sum of \emph{transitive permutation modules}
\begin{align}\label{dec_perm}
E^{\otimes d}=\bigoplus_{\lambda \in \Lambda(n,d)} M^{\lambda},
\end{align} where $M^{\lambda}$ is the $k$-span of the set $\{e_{\i} \ | \ \i \mbox{ belongs to } \lambda \}$.
Note that this set is the same as $\{e_{\i_{\lambda}}  \sigma \ | \  \sigma \in \frS_d/\frS_{\lambda} \}$ where $\i_{\lambda} = (1\dots1 ~2\dots2 \dots n\dots n)$ has
$ \lambda_l$ entries equal to $l$ and $\frS_{\lambda}$ denotes the Young subgroup $\frS_{\lambda_1} \times \dots \times \frS_{\lambda_n} \subset \frS_d$.
So we have a one to one correspondence between the elements of a basis of $M^{\lambda}$ and the elements of the group $\frS_d/\frS_{\lambda}$.

To every $\i\in\lambda$ we can associate a  dissection $d_{\i}$ of the set $ \{1,\dots,d\} $ into subsets of order $\lambda_1, \dots, \lambda_n$ as follows:
\[d_{\i}:=\{d_{\i}^1,\dots,d_{\i}^n\}\text{ with }d_{\i}^l:=\{j\mid i_j=l\}\]
That means $d_{\i}^l$ consists of all indices at which $l$ is situated in $\i$. Note that $d_{\i}^l\cap d_{\i}^{l'}=\emptyset$ for $l\neq l'$.

A permutation module $M^{\lambda}$ can be identified with the $k$-span of all dissections  $d_{\i}$ with $\i\in\lambda$, that we denote by $d_\lambda$. 
The action of $\sigma\in\frS_d$  on $d_{\i}^l$ is given by $\sigma^{-1}j$ for every element $j\in d_{\i}^l\subseteq \{1,\dots,d\} $.

\subsubsection*{Left modules}
If we denote by $\mmod k\frS_d$ the (right) $k\frS_d$-modules that are finitely generated projective over $k$, we have an equivalence of categories:
\begin{align*}\Hom_{k\frS_d}(-,k\frS_d)\colon\mmod k\frS_d^{\op}&\to k\frS_d\mmod\\
M&\mapsto\Hom_{k\frS_d}(M,k\frS_d)
\end{align*}
where the left action on $\Hom_{k\frS_d}(M,k\frS_d)$ is given by $(\pi f)\colon m\mapsto \pi\cdot f(m)$ for $\pi\in\frS_d$,\linebreak $f\in\Hom_{k\frS_d}(M,k\frS_d)$ and $m\in M$. Note that we can identify $\Hom_{k\frS_d}(E^{\otimes d},k\frS_d)$
with the left module $E^{\otimes d}$ where the action for $\pi\in\frS_d$ and $v_1 \otimes\dots\otimes v_d\in E^{\otimes d}$ is given by
\[\pi(v_1 \otimes\dots\otimes v_d):= v_{\pi^{-1}(1)}\otimes \dots \otimes v_{\pi^{-1}(d)}.\]
Namely, if we denote by $^\lambda M\subseteq E^{\otimes d}$ the left permutation module corresponding to $\lambda$, i.e.\ the module with basis
$\{\pi e_{\i_{\lambda}} :\pi \in \frS_d/\frS_{\lambda} \}$, we get the following isomorphism of left modules:
\begin{align*}
  ^\lambda M&\to \Hom_{k\frS_d}(M^\lambda,k\frS_d)\\
 \pi e_{\i_{\lambda}}&\mapsto (f_\pi:e_{\i_{\lambda}}\mapsto \pi)
\end{align*}
We will identify $^\lambda M$ and $\Hom_{k\frS_d}(M^\lambda,k\frS_d)$ via this isomorphism.

\subsection*{The tensor product of representations of symmetric groups}
The $k\frS_d$-module structure on the tensor product of two representations of $\frS_d$ is given via the Hopf algebra structure of the group algebra $k\frS_d$, i.e.\ for $M,~N\in \Mod k\frS_d$, let
$M\otimes_{k}N$ be the usual tensor product over $k$ together with the following diagonal action of $\frS_d$:
\begin{align*}
(M\otimes_{k}N)\times k\frS_d&\rightarrow (M\otimes_k N)\\
 ((m\otimes n),\pi)&\mapsto (m \pi\otimes n \pi)
\end{align*}

The tensor unit is given by $M^{(d)}\cong k$, the trivial $k\frS_d$-module.

\paragraph*{Tensoring permutation modules}
For $\lambda\in\Lambda(m,d)$ and $\mu\in\Lambda(n,d)$ define $A^\lambda_\mu$ to be the set of all $m\times n$ matrices $A=(a_{ij})$ with entries in $\bbN$ such that $\lambda_i=\sum_j a_{ij}$
and $\mu_j=\sum_ia_{ij}$.

For a field $k$ of characteristic 0, James and Kerber showed in  \cite{JK1981} how to decompose the tensor product of two permutation modules in terms of characters.
The following is an analogue for $k$ an arbitrary commutative ring:
\begin{Lemma}\label{tensorpermmod}
The tensor product of two permutation modules $M^\lambda$ and $M^\mu$ can be decomposed into permutation modules as follows:
\[M^\lambda\otimes_{k} M^\mu\cong\bigoplus_{A\in A^\lambda_\mu} M^A,\]
where $A$ is regarded as the composition $(a_{11},a_{12},\dots,a_{21},a_{22},\dots,a_{mn})$. 
\end{Lemma}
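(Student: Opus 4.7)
\textbf{Proof plan for Lemma \ref{tensorpermmod}.} The plan is to decompose $M^\lambda\otimes_k M^\mu$ as a permutation $k\frS_d$-module by analysing the orbits of the diagonal action on the natural $k$-basis. Since the decomposition will be into orbit-sums, it works verbatim over an arbitrary commutative ring $k$, which is exactly what is needed to extend the James--Kerber result beyond characteristic zero.

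First I would fix the $k$-basis $\{e_{\i}\otimes e_{\j}\mid \i\in\lambda,\ \j\in\mu\}$ of $M^\lambda\otimes_k M^\mu$, where $\i\in I(m,d)$ has $\lambda_k$ entries equal to $k$ and $\j\in I(n,d)$ has $\mu_l$ entries equal to $l$. By definition of the diagonal action, $\sigma\in\frS_d$ sends $e_{\i}\otimes e_{\j}$ to $e_{\i\sigma}\otimes e_{\j\sigma}$, so $\frS_d$ permutes this basis. Consequently $M^\lambda\otimes_k M^\mu$ splits as $\bigoplus_{\mathcal O} k\mathcal O$ over the orbits $\mathcal O$, and each summand is a transitive permutation module.

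Next I would associate to every pair $(\i,\j)$ the matrix $A(\i,\j)=(a_{kl})$ with $a_{kl}=\bigl|\{t\mid i_t=k,\ j_t=l\}\bigr|$. A direct count gives $\sum_l a_{kl}=\lambda_k$ and $\sum_k a_{kl}=\mu_l$, so $A(\i,\j)\in A^\lambda_\mu$; since place-permutation merely relabels the index $t$, the matrix $A(\i,\j)$ is constant on each orbit. Conversely, if $A(\i,\j)=A(\i',\j')$ one builds $\sigma\in\frS_d$ by matching, for each pair $(k,l)$, the $a_{kl}$ positions where $(\i,\j)$ carries $(k,l)$ to the $a_{kl}$ positions where $(\i',\j')$ carries $(k,l)$. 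Hence $A$ is a complete orbit invariant and the orbits are in bijection with $A^\lambda_\mu$.

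Finally, for each $A\in A^\lambda_\mu$ I would pick the canonical representative obtained by listing the pairs $(k,l)$ in lexicographic order: the first $a_{11}$ positions get $(1,1)$, the next $a_{12}$ positions get $(1,2)$, and so on. The stabilizer of this pair under place-permutation is exactly the Young subgroup $\frS_{a_{11}}\times\frS_{a_{12}}\times\cdots\times\frS_{a_{mn}}\subset\frS_d$ associated to the composition $(a_{11},a_{12},\ldots,a_{mn})$ of $d$ in $mn$ parts, so the corresponding orbit spans a copy of $M^A$. Assembling the orbits yields the claimed decomposition. I expect the only real obstacle to be bookkeeping: I need to order the pairs $(k,l)$ in precisely the same way that the composition $(a_{11},a_{12},\ldots,a_{mn})$ is read off, so that the stabilizer matches, on the nose, the Young subgroup defining $M^A$ in Section \ref{SymmetricGroup}.
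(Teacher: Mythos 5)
Your proposal is correct and follows essentially the same route as the paper: the paper also decomposes $M^\lambda\otimes_k M^\mu$ into $\frS_d$-orbits of the natural basis and uses the matrix of counts as the complete orbit invariant, only phrased in the language of dissections, where your $a_{kl}=|\{t\mid i_t=k,\ j_t=l\}|$ appears as $|d_{\i}^k\cap d_{\j}^l|$. The only cosmetic difference is that the paper writes down an explicit isomorphism of each orbit module with $d_A$ via intersections, whereas you identify the stabilizer of a canonical representative with the Young subgroup; both settle the same bookkeeping.
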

\begin{proof}
The idea of the proof is taken from \cite{JK1981}. If we take, for any composition $\nu$ of $d$, as a basis for the permutation module $M^\nu=d_\nu$ the set of all dissections $d_{\i}$ with $\i\in\nu$,
then a basis of $M^\lambda\otimes_{k} M^\mu=d_\lambda\otimes d_\mu$ is given by all products $d_{\i}\otimes d_{\j}$ with $\i\in\lambda$ and $\j\in\mu$.
 
Consider now the orbits of ${d}_{\lambda}\otimes {d}_\mu$ under the action of $\frS_d$.
Set $A^{{\i}}_{{\j}}:=(|d_{\i}^s\cap d_{\j}^t|)_{st}\in\M_{m\times n}(\bbN)$.
Note that $A^{{\i}}_{{\j}}\in A^\lambda_\mu$ for all $\i\in\lambda$ and $\j\in\mu$.
Two basis elements $d_{\i}\otimes d_{\j}$ and $d_{\i'}\otimes d_{\j'}$ belong to the same orbit if and only if
$A^{{\i}}_{{\j}}=A^{{\i'}}_{{\j'}}$. 
Thus, we can decompose the $k\frS_d$-module ${d}_{\lambda}\otimes {d}_\mu$ into a direct sum of submodules
\[{d}_{\lambda}\otimes {d}_\mu=\bigoplus_{A\in A^\lambda_\mu}({d}_\lambda\otimes{d}_\mu)_{A},\]
where $({d}_\lambda\otimes{d}_\mu)_{A}\subseteq {d}_\lambda\otimes{d}_\mu$ is spanned by all $d_{\i}\otimes d_{\j}$ such that $A=A^{{\i}}_{{\j}}$.
But $({d}_\lambda\otimes{d}_\mu)_{A}\cong {d}_A$ as $k\frS_d$-modules, where the isomorphism is given by 
\[\{d_{\i}^1,\dots,d_{\i}^m\}\otimes \{d_{\j}^1,\dots,d_{\j}^n\}\mapsto \{d_{\i}^1\cap d_{\j}^1,d_{\i}^1\cap d_{\j}^2,\dots,d_{\i}^2\cap d_{\j}^1,\dots,d_{\i}^m\cap d_{\j}^n\}.\qedhere\]
\end{proof}

\begin{remark}
 In the same way, it is possible to tensor left permutation modules. Namely for $^\lambda M=\Hom_{k\frS_d}(M^\lambda,k\frS_d)$ and $^\mu M=\Hom_{k\frS_d}(M^\mu,k\frS_d)$, we get
 \[^\lambda M\otimes_{k}~ ^\mu M\cong\bigoplus_{A\in A^\lambda_\mu}~ ^A M.\]
 For left modules, the tensor unit is given by $I_{k\frS_d}:={^{(d)}M}$.
\end{remark}

\begin{example}
 Let $\lambda=(3,1)\in \Lambda(2,4)$ and $\mu=(2,1,1)\in\Lambda(3,4)$. Then $A^\lambda_\mu$ consist of the following matrices:
\[ \begin{pmatrix}
    2&1&0\\
    0&0&1\\
   \end{pmatrix},
   \begin{pmatrix}
    2&0&1\\
    0&1&0\\
   \end{pmatrix},
      \begin{pmatrix}
    1&1&1\\
    1&0&0\\
   \end{pmatrix}.
\]
Let now $\i=(1112) \in \lambda$ and $\j=(1312) \in \mu$. Then $d_{\i}=\{\{123\},\{4\}\}$ and $d_{\j}=\{\{13\},\{4\},\{2\}\}$. The orbit of  $d_{\i}\otimes d_{\j}$ consists of the elements
\begin{align*}
 \{(d_{\i}\pi \otimes d_{\j}\pi ) \ | \ \pi\in\frS_4\}=&\{\{123\},\{4\}\}\otimes\{\{13\},\{4\},\{2\}\},\\
 &\{\{213\},\{4\}\}\otimes\{\{23\},\{4\},\{1\}\},\\
&\{\{132\},\{4\}\}\otimes\{\{12\},\{4\},\{3\}\},\\
&\{\{231\},\{4\}\}\otimes\{\{12\},\{4\},\{3\}\},\\
&\{\{312\},\{4\}\}\otimes\{\{23\},\{4\},\{1\}\},\dots\\
\end{align*}
and $A^{{\i}}_{{\j}}=\begin{pmatrix}
                            |\{123\}\cap\{23\}|& |\{123\}\cap\{4\}|& |\{123\}\cap\{1\}|\\
                            |\{4\}\cap\{23\}|& |\{4\}\cap\{4\}|& |\{4\}\cap\{1\}|\\
                           \end{pmatrix}=
                           \begin{pmatrix}
                            2& 0&1 \\
                            0&1&0\\
                           \end{pmatrix}.$\\
Recall that $A^{{\i}}_{{\j}}=A^{{\i}\pi}_{{\j}\pi}$ for all $\pi\in\frS_d$ and that $(d_\lambda\otimes{d}_\mu)_{A^{{\i}}_{{\j}}}\cong{d}_{A^{{\i}}_{{\j}}}$. 
So we get \[({d}_\lambda\otimes{d}_\mu)_{A^{{\i}}_{{\j}}}\cong M^{A^{{\i}}_{{\j}}}=M^{(2,0,1,0,1,0)}\cong M^{(2,1,1)}.\] 
There are two more orbits that can be obtained by taking, for example, the elements
\begin{eqnarray*}
 &\i'=(1112)\text{ and }\j'=(1213)~\text{ resp. }&\i''=(1112)\text{ and }\j''=(1321),\\
 \end{eqnarray*}
which span the submodules $M^{(2,1,0,0,0,1)}\cong M^{(2,1,1)}$ resp. $M^{(1,1,1,1,0,0)}\cong M^{(1,1,1,1)}$.
All in all we get
\[M^{(3,1)}\otimes M^{(2,1,1)}\cong M^{(2,1,1)}\oplus M^{(2,1,1)}\oplus M^{(1,1,1,1)}=2\cdot M^{(2,1,1)}\oplus M^{(1,1,1,1)}.\]
\end{example}

\section{From strict polynomial functors to representations of the symmetric group}
The set of matrices $A^\lambda_\mu\subset \M_{m\times n}(\bbN)$ defined in the previous section also plays  an important role in describing the set 
of morphisms  $\Hom_{\Gamma^d_k}(\Gamma^\lambda,\Gamma^\mu)$ in $\Rep\Gamma^d_k$. This has been done by Totaro in \cite[p. 8]{To1997} and is also used in
\cite[Lemma 4.3]{Kr2014}. It yields the following
\begin{Lemma} 
 Let $\lambda\in\Lambda(n,d)$, $\mu\in\Lambda(m,d)$ be compositions of $d$. The elements of  $ A^\lambda_\mu$ give a natural $k$-basis of $\Hom_{\Gamma^d_k}(\Gamma^\lambda,\Gamma^\mu)$.\qed
\end{Lemma}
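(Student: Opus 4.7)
The strategy is to realise $\Hom_{\Gamma^d_k}(\Gamma^\lambda,\Gamma^\mu)$ as a doubly-indexed direct summand of the larger Hom space
\[\Hom_{\Gamma^d_k}(\Gamma^{d,k^n},\Gamma^{d,k^m})\;\cong\;\Gamma^{d,k^m}(k^n)\;=\;\Gamma^d\Hom_k(k^m,k^n),\]
where the first isomorphism is the Yoneda lemma applied to the representable $\Gamma^{d,k^n}$. The right-hand side is a free $k$-module with a natural basis of divided-power monomials $\prod_{i,j}E_{ij}^{(a_{ij})}$ in the matrix units $E_{ij}\colon k^m\to k^n$, indexed by $n\times m$ matrices $A=(a_{ij})$ with entries in $\bbN$ summing to $d$. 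Applying the canonical decomposition (\ref{can.dec}) to source and target simultaneously gives
\[\Hom_{\Gamma^d_k}(\Gamma^{d,k^n},\Gamma^{d,k^m})=\bigoplus_{\lambda\in\Lambda(n,d)}\bigoplus_{\mu\in\Lambda(m,d)}\Hom_{\Gamma^d_k}(\Gamma^\lambda,\Gamma^\mu),\]
so it suffices to identify the $(\lambda,\mu)$-summand with the span of those basis monomials whose index lies in $A^\lambda_\mu$, i.e.\ whose row sums are $\lambda$ and column sums are $\mu$.

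I would handle the two constraints separately. For the column sums (target side), decompose $\Hom(k^m,k^n)=(k^n)^m$ with the $j$-th summand spanned by $\{E_{1j},\ldots,E_{nj}\}$, and iterate the universal splitting $\Gamma^d(U\oplus V)=\bigoplus_{a+b=d}\Gamma^a U\otimes\Gamma^b V$ to obtain
\[\Gamma^d\Hom(k^m,k^n)=\bigoplus_\mu\Gamma^{\mu_1}(k^n)\otimes\cdots\otimes\Gamma^{\mu_m}(k^n)=\bigoplus_\mu\Gamma^\mu(k^n).\]
By construction this is precisely the decomposition induced on $\Gamma^{d,k^m}(k^n)$ by the target-side splitting $\Gamma^{d,k^m}=\bigoplus_\mu\Gamma^\mu$, and inspection shows that the $\mu$-summand is spanned by monomials with column sums $\mu$. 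For the row sums (source side), recognise (\ref{can.dec}) as a weight decomposition. Given $t=\diag(t_1,\ldots,t_n)\in\End(k^n)$, the morphism $t^{(d)}\in\Gamma^d\End(k^n)$ acts on $\Gamma^\lambda\subseteq\Gamma^{d,k^n}$ by the scalar $t_1^{\lambda_1}\cdots t_n^{\lambda_n}$, since $\Gamma^\lambda(V)$ sits inside $\Gamma^d(V^n)$ as the degree-$\lambda_i$ part in the $i$-th copy of $V$. Transporting through Yoneda and using $t\circ E_{ij}=t_iE_{ij}$, the same element $t^{(d)}$ scales the monomial $\prod E_{ij}^{(a_{ij})}\in\Gamma^d\Hom(k^m,k^n)$ by $\prod_it_i^{\sum_j a_{ij}}$. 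Hence the $\lambda$-weight piece of $\Hom(\Gamma^{d,k^n},\Gamma^{d,k^m})$, which coincides with $\Hom(\Gamma^\lambda,\Gamma^{d,k^m})$, is spanned exactly by the monomials with row sums $\lambda$. Intersecting the two descriptions yields the claimed basis $A^\lambda_\mu$ of $\Hom_{\Gamma^d_k}(\Gamma^\lambda,\Gamma^\mu)$.

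The main obstacle is to make the weight-space argument rigorous over an arbitrary commutative ring $k$, where one cannot directly invoke character theory of a torus of rational points. One option is to work with the explicit idempotents $e_\lambda\in S_k(n,d)=\Gamma^d\End(k^n)$ realising (\ref{can.dec}) --- namely the diagonal divided-power monomials $E_{11}^{(\lambda_1)}\cdots E_{nn}^{(\lambda_n)}$ --- and to check by direct computation that their action on $\Gamma^{d,k^m}(k^n)$ via the Yoneda pairing projects onto the row-sum-$\lambda$ part. Alternatively, one can avoid the weight language altogether by further splitting each factor $\Gamma^{\mu_j}(k^n)=\bigoplus_{c\in\Lambda(n,\mu_j)}k$ via $k^n=\bigoplus_ik$, regrouping the resulting summands of $\Gamma^\mu(k^n)$ by row sums, and verifying that the regrouping coincides with the decomposition induced by (\ref{can.dec}) on the source side.
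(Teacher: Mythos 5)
Your argument is sound, and it is worth pointing out that the paper itself does not prove this lemma at all: the statement is closed with a \qed and the content is deferred to Totaro \cite{To1997} and to \cite[Lemma 4.3]{Kr2014}. What you have written is essentially the standard argument found there. The reduction via Yoneda, $\Hom_{\Gamma^d_k}(\Gamma^{d,k^n},\Gamma^{d,k^m})\cong\Gamma^{d,k^m}(k^n)=\Gamma^d\Hom(k^m,k^n)$, together with the divided-power monomial basis indexed by $n\times m$ matrices with entries in $\bbN$ summing to $d$, is exactly the right starting point, and your target-side identification of the $\mu$-summand with the column-sum-$\mu$ monomials is immediate because the exponential isomorphism $\Gamma^d(U\oplus V)\cong\bigoplus_{a+b=d}\Gamma^aU\otimes_k\Gamma^bV$ is precisely how the decomposition \eqref{can.dec} is produced. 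The only step that genuinely needs an argument is the source-side (row-sum) identification, and you correctly diagnose that the torus-weight heuristic is not a proof over an arbitrary commutative ring (there may be too few diagonal endomorphisms to separate weights); your proposed repair is the right one and is shorter than you suggest. Under Yoneda the idempotents realizing \eqref{can.dec} for $\Gamma^{d,k^n}$ are the diagonal divided monomials $\xi_\lambda=E_{11}^{(\lambda_1)}\cdots E_{nn}^{(\lambda_n)}\in\Gamma^d\End(k^n)$ (they sum to $\mathrm{id}_{k^n}^{\otimes d}$, which corresponds to the identity transformation), their action on $\Hom_{\Gamma^d_k}(\Gamma^{d,k^n},\Gamma^{d,k^m})\cong\Gamma^d\Hom(k^m,k^n)$ is by composition, and writing $\xi_A$ for the orbit-sum monomial attached to a matrix $A$ one checks in one line, using $E_{ii}\circ E_{i'j}=\delta_{ii'}E_{ij}$ on the summands of the orbit sums, that $\xi_\lambda\circ\xi_A=\xi_A$ when the row sums of $A$ equal $\lambda$ and $\xi_\lambda\circ\xi_A=0$ otherwise. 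Inserting that computation makes your proof complete and valid over any commutative ring; the only remaining cosmetic point is to add a sentence on why the basis is \emph{natural}, which follows since it is defined by the canonical idempotents and the Yoneda isomorphism rather than by any choices.
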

Using \ref{can.dec} and \ref{dec_perm} one obtains the following decompositions
\begin{align*}
\End_{\Gamma^d_k}(\Gamma^{d,k^n})&=\bigoplus\limits_{\lambda,\mu\in\Lambda(n,d)} \Hom_{\Gamma^d_k}(\Gamma^\lambda,\Gamma^\mu)\\
S_k(n,d) = \End_{k\frS_d}(E^{\otimes d})&=\bigoplus\limits_{\lambda,\mu\in\Lambda(n,d)}\Hom_{k\frS_d}(M^\mu,M^\lambda)
\end{align*}
and using the description of a basis of the Schur algebra (e.g.\ in \cite{Green}), one gets even more:
\begin{Lemma}
  Let $\lambda,~\mu\in\Lambda(n,d)$ be compositions of $d$. Then 
  \[\Hom_{\Gamma^d_k}(\Gamma^\lambda,\Gamma^\mu)\cong \Hom_{k\frS_d}(M^\mu,M^\lambda)\]
  where the basis elements of $S_k(n,d)$ contained in $\Hom_{k\frS_d}(M^\mu,M^\lambda)$ can be identified with the elements of $A^\lambda_\mu$. \qed
\end{Lemma}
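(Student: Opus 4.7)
The plan is to deduce the lemma by restricting the global algebra isomorphism $S_k(n,d)\cong\End_{\Gamma^d_k}(\Gamma^{d,k^n})^{\op}$ to the appropriate blocks and then matching bases on both sides.

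First I would expand both algebras according to the canonical decompositions already recorded in the excerpt. From \eqref{can.dec} one gets
\[\End_{\Gamma^d_k}(\Gamma^{d,k^n})=\bigoplus_{\lambda,\mu\in\Lambda(n,d)}\Hom_{\Gamma^d_k}(\Gamma^\lambda,\Gamma^\mu),\]
and from \eqref{dec_perm} one gets
\[\End_{k\frS_d}(E^{\otimes d})=\bigoplus_{\lambda,\mu\in\Lambda(n,d)}\Hom_{k\frS_d}(M^\mu,M^\lambda).\]
Combining these with $S_k(n,d)=\End_{k\frS_d}(E^{\otimes d})=\End_{\Gamma^d_k}(\Gamma^{d,k^n})^{\op}$ and tracking the passage to the opposite algebra, the $(\lambda,\mu)$-block on the divided-power side is sent exactly to the $(\mu,\lambda)$-block on the symmetric-group side; this yields the claimed $k$-linear isomorphism $\Hom_{\Gamma^d_k}(\Gamma^\lambda,\Gamma^\mu)\cong\Hom_{k\frS_d}(M^\mu,M^\lambda)$.

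Next I would match the two natural bases. The preceding lemma (Totaro's description) already gives a natural $k$-basis of $\Hom_{\Gamma^d_k}(\Gamma^\lambda,\Gamma^\mu)$ parametrized by $A^\lambda_\mu$. On the other hand, Green's basis $\{\xi_{\i,\j}\}$ of $S_k(n,d)$ is indexed by $\frS_d$-orbits of pairs $(\i,\j)\in I(n,d)\times I(n,d)$, and via the bijection sending the orbit of $(\i,\j)$ to the matrix with entries $|d_{\i}^s\cap d_{\j}^t|$ (exactly as in the proof of Lemma \ref{tensorpermmod}), the basis elements lying in the summand $\Hom_{k\frS_d}(M^\mu,M^\lambda)$ correspond precisely to those matrices whose row sums are $\lambda$ and column sums are $\mu$, i.e.\ to $A^\lambda_\mu$. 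Thus on both sides the same combinatorial set labels a distinguished basis.

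The only genuine content beyond bookkeeping is showing that these two labelings by $A^\lambda_\mu$ actually coincide under the block isomorphism, not merely that the two bases have the same cardinality. I expect this to be the main obstacle, and I would handle it by writing Totaro's basis element attached to a matrix $A$ as the explicit element of $\Gamma^d\Hom(k^{\ell(\lambda)},k^{\ell(\mu)})$ built from $A$ (a divided monomial in the elementary matrix units) and then tracking its image under the Schur functor into $\Hom_{k\frS_d}(M^\mu,M^\lambda)$. The image will act on the basis vector $e_{\i_\mu}$ by summing, with coefficient $1$, over all ways of partitioning its index set according to $A$, which is exactly Green's basis element $\xi_A$. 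Once this identification is verified for one representative matrix, it holds for the whole basis, and the proof is complete.
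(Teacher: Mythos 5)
Your argument is correct and is essentially the paper's own (the paper leaves the lemma as an assertion, the intended justification being exactly the block decompositions coming from \eqref{can.dec} and \eqref{dec_perm} inside $S_k(n,d)=\End_{k\frS_d}(E^{\otimes d})=\End_{\Gamma^d_k}(\Gamma^{d,k^n})^{\op}$ together with Green's basis $\{\xi_{\i,\j}\}$ indexed by orbits of pairs, i.e.\ by matrices in $A^\lambda_\mu$). Your additional step of matching Totaro's basis element attached to $A$ with Green's $\xi_A$ is a correct and welcome filling-in of detail that the paper only implies.
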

For $\omega=(1,\dots,1)\in\Lambda(d,d)$ the composition of $d$ consisting of $d$ times $1$, this yields in particular
\[\Hom_{\Gamma^d_k}(\Gamma^\omega,\Gamma^\mu)\cong \Hom_{k\frS_d}(M^\mu,k\frS_d)\]
\[\End_{\Gamma^d_k}(\Gamma^\omega)\cong \End_{k\frS_d}(M^\omega)^\op\cong \End_{k\frS_d}(k\frS_d)^\op\cong k\frS_d^\op,\]
where we use the identification $M^\omega\cong k\frS_d$.
Thus we get a functor $\F$ from the category of strict polynomial functors to the category of 
representations of the symmetric group:
\[\F=\Hom_{\Gamma^d_k}(\Gamma^\omega,-)\colon\Rep\Gamma^d_k\rightarrow\Mod k\frS_d^\op=k\frS_d \Mod\]
The functors $\Gamma^\lambda$ are mapped under $\F$ to the permutation modules,\ i.e. 
\[\F(\Gamma^\lambda)=\Hom_{\Gamma^d_k}(\Gamma^\omega,\Gamma^\lambda)\cong\Hom_{k\frS_d}(M^\lambda,M^\omega)\cong\Hom_{k\frS_d}(M^\lambda,k\frS_d)={^\lambda M}\]

Note that, in particular, the representable functor $\Gamma^{d,k^n}=\bigoplus_{\lambda\in\Lambda(n,d)}\Gamma^\lambda$ is mapped to
$\Hom_{k\frS_d}(E^{\otimes d},k\frS_d)=\bigoplus_{\lambda\in\Lambda(n,d)}{^\lambda M}$.

\paragraph*{An equivalence of categories}
Let $\Gamma=\{\Gamma^\lambda\}_{\lambda\in\Lambda(n,d)}$ and $M=\{{^\lambda M}\}_{\lambda\in\Lambda(n,d)}$. Denote by $\add\Gamma$ the full subcategory of $\Rep\Gamma^d_k$ whose 
objects are direct summands  of finite direct sums of $\Gamma^\lambda$. Define $\add M$ similarly as a subcategory of $k\frS_d\Mod$.

\begin{Lemma}\label{equivalence}
The functor $\F=\Hom_{\Gamma^d_k}(\Gamma^\omega,-)$ induces an equivalence of categories between $\add\Gamma$ and $\add M$.
\end{Lemma}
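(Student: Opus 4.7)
The plan is to verify that $\F$ is fully faithful on the generators $\{\Gamma^\lambda\}_{\lambda\in\Lambda(n,d)}$ of $\add\Gamma$, maps them isomorphically to the generators $\{{}^\lambda M\}$ of $\add M$, and then to invoke the standard fact that an additive functor enjoying these two properties extends to an equivalence between the additive closures (finite direct sums followed by direct summands). Essential surjectivity onto the generators of $\add M$ is handled by the computation just preceding the lemma, which gives $\F(\Gamma^\lambda)\cong{}^\lambda M$; additivity of $\F$ then sends finite direct sums of the $\Gamma^\lambda$ to the corresponding direct sums of the ${}^\lambda M$, and once full faithfulness is in place, idempotent-splitting in both categories places every object of $\add M$ in the essential image.

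The main step is therefore to show that for all $\lambda,\mu\in\Lambda(n,d)$ the map
\[
\F_{\lambda,\mu}\colon \Hom_{\Gamma^d_k}(\Gamma^\lambda,\Gamma^\mu)\to \Hom_{k\frS_d}({}^\lambda M,{}^\mu M)
\]
is an isomorphism. Both sides are free $k$-modules with natural bases indexed by $A^\lambda_\mu$: the source by Totaro's parametrization (the lemma recalled just above), and the target by combining the identification $\Hom_{\Gamma^d_k}(\Gamma^\lambda,\Gamma^\mu)\cong\Hom_{k\frS_d}(M^\mu,M^\lambda)$ with the contravariant equivalence $\Hom_{k\frS_d}(-,k\frS_d)\colon\mmod k\frS_d^{\op}\to k\frS_d\mmod$. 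It remains to check that $\F_{\lambda,\mu}$ coincides with the composite of these two identifications: post-composition by $\phi\in\Hom_{\Gamma^d_k}(\Gamma^\lambda,\Gamma^\mu)$ on $\Hom_{\Gamma^d_k}(\Gamma^\omega,\Gamma^\lambda)\cong\Hom_{k\frS_d}(M^\lambda,k\frS_d)$ is transported by the Yoneda-type identifications to precomposition with the right-module morphism $M^\mu\to M^\lambda$ associated to $\phi$, i.e.\ precisely to the image of $\phi$ under $\Hom_{k\frS_d}(-,k\frS_d)$.

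I expect this naturality verification to be the main obstacle: the two sides of $\F_{\lambda,\mu}$ are free modules of the same finite rank by independent counting, and one needs to ensure that the map actually induced by $\F$ is the one matching the two Totaro-type bases, rather than a coincidental agreement of ranks. This can be settled either by an abstract diagram chase exploiting naturality of the isomorphism $\F(\Gamma^\nu)\cong{}^\nu M$ in $\nu$, or by tracing a single basis element $A\in A^\lambda_\mu$ through all the identifications. Once this is granted, the equivalence $\add\Gamma\simeq\add M$ follows formally from additivity and idempotent-splitting, as outlined above.
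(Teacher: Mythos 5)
Your proposal is correct and follows essentially the same route as the paper: the paper also reduces to the generators (via $\add\Gamma=\add\Gamma^{d,k^n}$ and $\add M=\add\Hom_{k\frS_d}(E^{\otimes d},k\frS_d)$) and identifies the map on morphisms as the composite $\Hom_{\Gamma^d_k}(\Gamma^\lambda,\Gamma^\mu)\cong\Hom_{k\frS_d}(M^\mu,M^\lambda)\cong\Hom_{k\frS_d}({^\lambda M},{^\mu M})$ coming from the Totaro/Schur-algebra basis and the duality $\Hom_{k\frS_d}(-,k\frS_d)$, then passes to additive closures. If anything, you are more explicit than the paper about verifying that the map actually induced by $\F$ agrees with this composite rather than merely matching ranks, which is a point the paper leaves implicit.
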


\begin{proof}
Since $\Gamma^{d,k^n}=\bigoplus_{\lambda\in\Lambda(n,d)}\Gamma^\lambda$ we have $\add\Gamma=\add \Gamma^{d,k^n}$. Similarly one can see that $\add M=\add \Hom_{k\frS_d}(E^{\otimes d},k\frS_d)$.
Thus we get the following commutative diagram:
\[\xymatrixcolsep{5pc}\xymatrix{\Rep\Gamma^d_k\ar[r]^{\F=\Hom_{\Gamma^d_k}(\Gamma^\omega,-)} &k\frS_d\Mod \\
	    **[l]\add\Gamma^{d,k^n}=\add\Gamma\ar[r]^{\F|_{\add\Gamma}} \ar@{^{(}->}[u]&**[r]\add M=\add \Hom_{k\frS_d}(E^{\otimes d},k\frS_d)\ar@{^{(}->}[u]\\
	    }\]
The object $\Gamma^{d,k^n}$ is mapped under $\F$ to $\Hom_{k\frS_d}(E^{\otimes d},k\frS_d)$. 

For the morphisms $\F$ induces the following isomorphism:
\begin{align*}\Hom_{\Gamma^d_k}(\Gamma^\lambda,\Gamma^\mu)&\rightarrow\Hom_{k\frS_d}(\Hom_{\Gamma^d_k}(\Gamma^\omega,\Gamma^\lambda),\Hom_{\Gamma^d_k}(\Gamma^\omega,\Gamma^\mu))\\
&\cong\Hom_{k\frS_d}(\Hom_{k\frS_d}(M^\lambda,k\frS_d),\Hom_{k\frS_d}(M^\mu,k\frS_d))=\Hom_{k\frS_d}({^\lambda M},{^\mu M})\qedhere\end{align*}
\end{proof}

If we do not restrict  to the subcategories  $\add\Gamma$ and $\add M$ the functor $\F=\Hom_{\Gamma^d_k}(\Gamma^\omega,-)$ is not an equivalence in general. 
Schur proved that it is for $k$ a field of characteristic $0$. But for example if $k$ is a field of positive characteristic $p$, 
the categories $\Rep\Gamma^p_k$ and $k\frS_p\Mod $ are not equivalent.

\paragraph*{The monoidal structure}
Independently of any assumption on the commutative ring $k$ we have the following main result of this work:
\begin{Theorem} \label{mainresult}
 The functor \[\F=\Hom_{\Gamma^d_k}(\Gamma^\omega,-)\colon\Rep\Gamma^d_k\rightarrow  k\frS_d\Mod\] preserves the monoidal structure defined on strict polynomial functors,
 i.e.
 \begin{align}\label{eq:iso}\Hom_{\Gamma^d_k}(\Gamma^\omega,X\otimes_{\Gamma^d_k}Y)\cong\Hom_{\Gamma^d_k}(\Gamma^\omega,X)\otimes_{k}\Hom_{\Gamma^d_k}(\Gamma^\omega,Y)\end{align}
 for all $X$ and $Y$ in $\Rep\Gamma^d_k$ and 
 \[\Hom_{\Gamma^d_k}(\Gamma^\omega,I_{\Gamma^d_k})=I_{k\frS_d}.\]
\end{Theorem}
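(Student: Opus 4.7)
The unit statement $\F(I_{\Gamma^d_k}) \cong I_{k\frS_d}$ is immediate from the earlier identification $\F(\Gamma^\lambda) \cong {}^\lambda M$ applied to $\lambda = (d)$, since $I_{\Gamma^d_k} = \Gamma^{(d)}$ and $I_{k\frS_d} = {}^{(d)}M$. For the natural isomorphism \eqref{eq:iso}, my plan is the standard two-step reduction: first prove the isomorphism when $X$ and $Y$ are representable, then extend to arbitrary objects using the colimit presentation $X = \colim \F_X$ and the fact that all functors involved preserve the relevant colimits.

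For the representable base case, I take $X = \Gamma^{d,k^m}$ and $Y = \Gamma^{d,k^n}$. By the defining formula $\Gamma^{d,V} \otimes_{\Gamma^d_k} \Gamma^{d,W} = \Gamma^{d, V \otimes W}$ and the canonical decomposition \eqref{can.dec},
\[
\Gamma^{d,k^m} \otimes_{\Gamma^d_k} \Gamma^{d,k^n} \;=\; \Gamma^{d, k^{mn}} \;=\; \bigoplus_{\nu \in \Lambda(mn,d)} \Gamma^\nu,
\]
so applying $\F$ produces $\bigoplus_\nu {}^\nu M$. On the other hand, using \eqref{can.dec} again together with the left-module analogue of Lemma \ref{tensorpermmod} stated in the remark following it,
\[
\F(\Gamma^{d,k^m}) \otimes_k \F(\Gamma^{d,k^n}) \;\cong\; \bigoplus_{\lambda \in \Lambda(m,d),\, \mu \in \Lambda(n,d)} {}^\lambda M \otimes_k {}^\mu M \;\cong\; \bigoplus_{\lambda,\mu}\bigoplus_{A \in A^\lambda_\mu} {}^A M.
\]
The natural bijection $\Lambda(mn,d) \leftrightarrow \bigsqcup_{\lambda,\mu} A^\lambda_\mu$---viewing a composition of $d$ in $mn$ parts as an $m\times n$ matrix of non-negative integers with row sums $\lambda$ and column sums $\mu$---matches the two decompositions. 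Since every $V \in \P_k$ is a direct summand of $k^m$ for some $m$, and $\Gamma^{d,V}$ is correspondingly a direct summand of $\Gamma^{d,k^m}$ (via the $(d,0)$ piece of the divided-power expansion $\Gamma^d(A\oplus B) = \bigoplus_{i+j=d}\Gamma^iA\otimes\Gamma^jB$), the case of arbitrary representable $X, Y$ then follows.

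To pass from representables to general $X, Y$ I would invoke three colimit-preservation facts. First, by definition $X \otimes_{\Gamma^d_k} Y$ is a colimit over $\C_X$ (and dually over $\C_Y$) of tensor products of representables. Second, $\F = \Hom_{\Gamma^d_k}(\Gamma^\omega, -)$ preserves all colimits: by \eqref{can.dec} with $n=d$, $\Gamma^\omega$ is a direct summand of the representable $\Gamma^{d,k^d}$, hence $\F$ is a direct summand of $\Hom_{\Gamma^d_k}(\Gamma^{d,k^d},-) \cong \mathrm{ev}_{k^d}$, which preserves colimits because colimits in $\Rep\Gamma^d_k$ are computed pointwise. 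Third, $\otimes_k$ on $k\frS_d \Mod$ preserves colimits in each variable, being left adjoint to the internal hom. Combining these with the naturality of the representable isomorphism, the general iso \eqref{eq:iso} drops out by a formal colimit manipulation.

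The main technical obstacle will be to exhibit the representable isomorphism as a genuinely natural transformation rather than a mere summand-by-summand matching. Concretely, one should construct a canonical lax monoidal structure $\F(X) \otimes_k \F(Y) \to \F(X \otimes_{\Gamma^d_k} Y)$---most cleanly arising from a coalgebra-like map $\Gamma^\omega \to \Gamma^\omega \otimes_{\Gamma^d_k} \Gamma^\omega$ via hom--tensor adjunction---and verify that on representables $\Gamma^\lambda, \Gamma^\mu$ it restricts to the summand bijection described above. Once this coherent natural transformation is in place, naturality in $V, W$ is automatic and the colimit extension delivers the theorem without further pain.
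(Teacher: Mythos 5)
Your proposal follows essentially the same route as the paper: reduce, via the colimit presentation of $X$ and $Y$ and the colimit-preservation of $\F$ and $\otimes_k$, to functors represented by free modules, then match the decomposition $\Gamma^{d,k^{n\cdot m}}\cong\bigoplus_\nu\Gamma^\nu$ with $\bigoplus_{\lambda,\mu}\bigoplus_{A\in A^\lambda_\mu}{^A M}$ using the matrix bijection and Lemma \ref{tensorpermmod}, and identify the units via $\F(\Gamma^{(d)})\cong{^{(d)}M}$. The ``technical obstacle'' you flag about making the coherence maps natural is handled no more fully in the paper, which disposes of it in one sentence by appealing to $\F(\Gamma^\lambda)=\Hom_{k\frS_d}(M^\lambda,k\frS_d)$ and the naturality of $\Hom_{k\frS_d}(-,k\frS_d)$.
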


\begin{proof}
As observed in the first section, every functor $X$ in $\Rep\Gamma^d_k$ is a colimit of representable functors.
One can show that we obtain the same if we only take the colimit with respect to those functors that are represented by finitely generated free modules. 

Moreover the functor $\Hom_{\Gamma^d_k}(\Gamma^\omega,-)$ preserves colimits, since it has a right adjoint.
Thus it is enough to show the isomorphism \eqref{eq:iso} for functors represented by free modules. 
Let $V=k^n$ and $W=k^m$ for some non-negative integers $n$ and $m$.
Using the definition of the internal tensor product and the canonical decomposition \eqref{can.dec} we get
\[\Gamma^{d,k^n}\otimes_{\Gamma^d_k}\Gamma^{d,k^m}=\Gamma^{d,k^n\otimes k^m}\cong\Gamma^{d,k^{n\cdot m}}=\bigoplus_{\nu\in\Lambda(n\cdot m,d)}\Gamma^\nu.\]
Writing down the entries of $\nu\in\Lambda(n\cdot m,d)$ in an $n\times m$ matrix, we obtain a bijection between the set $\Lambda(n\cdot m,d)$ and the set of all $n\times m$ matrices
with entries in $\bbN$ such that the sum of all entries is $d$. Every such matrix $A=(a_{ij})$ defines a couple $(\lambda,\mu)$ with $\lambda\in\Lambda(n,d)$ and $\mu\in\Lambda(m,d)$ where
$\lambda_i$ is given by $\sum_{j}a_{ij}$ and $\mu_j$ is given by $\sum_ia_{ij}$, so that $A\in A^\lambda_\mu$. 
All in all we get a bijection of sets
\[\Lambda(n\cdot m,d)\longleftrightarrow \{A\in\M_{n\times m}(\bbN)\mid \sum_{st}a_{st}=d\}\longleftrightarrow \bigcup_{\substack{\lambda\in\Lambda(n,d)\\\mu\in\Lambda(m,d)}}A^\lambda_\mu\]
and thus the following decomposition
\[\Gamma^{d,k^n}\otimes_{\Gamma^d_k}\Gamma^{d,k^m}\cong\bigoplus_{\nu\in\Lambda(n\cdot m,d)}\Gamma^\nu=\bigoplus_{\substack{\lambda\in\Lambda(n,d)\\\mu\in\Lambda(m,d)}}~\bigoplus_{A\in A^\lambda_\mu}\Gamma^A,\]
where the matrix $A=(a_{ij})$ is seen as the composition $(a_{11},a_{12},\dots,a_{21},a_{22},\dots,a_{mn})$.

Finally this yields
\begin{align*}\Hom_{\Gamma^d_k}(\Gamma^\omega,\Gamma^{d,k^n}\otimes_{\Gamma^d_k}\Gamma^{d,k^m})&\cong\Hom_{\Gamma^d_k}(\Gamma^\omega,\bigoplus_{\substack{\lambda\in\Lambda(n,d)\\\mu\in\Lambda(m,d)}}~\bigoplus_{A\in A^\lambda_\mu}\Gamma^A)\\
&\cong\bigoplus_{\substack{\lambda\in\Lambda(n,d)\\\mu\in\Lambda(m,d)}}~\bigoplus_{A\in A^\lambda_\mu}\Hom_{\Gamma^d_k}(\Gamma^\omega,\Gamma^A)\\
&\cong\bigoplus_{\substack{\lambda\in\Lambda(n,d)\\\mu\in\Lambda(m,d)}}~\bigoplus_{A\in A^\lambda_\mu}{^A M}\\
&\overset{(*)}{\cong}\bigoplus_{\substack{\lambda\in\Lambda(n,d)\\\mu\in\Lambda(m,d)}}{^\lambda M}\otimes_{k}{^\mu M}\\
&\cong\left(\bigoplus_{\lambda\in\Lambda(n,d)}{^\lambda M}\right)\otimes_{k}\left(\bigoplus_{\mu\in\Lambda(m,d)}{^\mu M}\right)\\
&\cong\left(\Hom_{\Gamma^d_k}(\Gamma^\omega,\Gamma^{d,k^n})\right)\otimes_{k}\left(\Hom_{\Gamma^d_k}(\Gamma^\omega,\Gamma^{d,k^m})\right),\end{align*}
where $(*)$ is due to Lemma \ref{tensorpermmod}.

For the respective tensor units we get:
\[\Hom_{\Gamma^d_k}(\Gamma^\omega,I_{\Gamma^d_k})=\Hom_{\Gamma^d_k}(\Gamma^\omega,\Gamma^d)\cong \Hom_{k\frS_d}(M^{(d)},k\frS_d)={^{(d)}M}=I_{k\frS_d}.\]

The naturality of the coherence maps is obtained using that $\F(\Gamma^\lambda)=\Hom_{k\frS_d}(M^\lambda,k\frS_d)$ and the naturality of $\Hom_{k\frS_d}(-,k\frS_d)$. 
\end{proof}

\begin{Corollary}
 The tensor product of $\Gamma^\lambda$ and $\Gamma^\mu$ can be decomposed by the same rule as the tensor product of $M^\lambda$ and $M^\mu$, namely
 \[\Gamma^\lambda\otimes_{\Gamma^d_k}\Gamma^\mu\cong\bigoplus_{A\in A^\lambda_\mu}\Gamma^A\]
\end{Corollary}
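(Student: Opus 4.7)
The plan is to transport the analogous decomposition for permutation modules (Lemma~\ref{tensorpermmod}) across the equivalence of Lemma~\ref{equivalence}, using that $\F$ is monoidal by Theorem~\ref{mainresult}.

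Applying $\F$ to the left-hand side, Theorem~\ref{mainresult} combined with the identification $\F(\Gamma^\lambda)\cong {^\lambda M}$ gives
\[\F(\Gamma^\lambda\otimes_{\Gamma^d_k}\Gamma^\mu)\cong \F(\Gamma^\lambda)\otimes_k \F(\Gamma^\mu)\cong {^\lambda M}\otimes_k {^\mu M}.\]
By the left-module analogue of Lemma~\ref{tensorpermmod} (the remark following that lemma), this decomposes as $\bigoplus_{A\in A^\lambda_\mu}{^A M}$, which is in turn $\F\bigl(\bigoplus_{A\in A^\lambda_\mu}\Gamma^A\bigr)$ by additivity of $\F$ and the identification $\F(\Gamma^A)\cong {^A M}$.

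Next I would check that both sides of the claim lie in $\add\Gamma$ for $n$ chosen large enough that $\Lambda(n,d)$ contains $\lambda$, $\mu$, and every composition arising from matrices in $A^\lambda_\mu$ (extending by trailing zeros if needed). The right-hand side is manifestly a direct sum of $\Gamma^A$'s, hence lies in $\add\Gamma$. For the left-hand side, the identification $\Gamma^{d,k^n}\otimes_{\Gamma^d_k}\Gamma^{d,k^m}\cong\Gamma^{d,k^{nm}}$ together with the canonical decomposition~\eqref{can.dec} shows that $\Gamma^\lambda\otimes_{\Gamma^d_k}\Gamma^\mu$ is a direct summand of a finite direct sum of $\Gamma^\nu$'s, and so it too lies in $\add\Gamma$.

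Since $\F|_{\add\Gamma}$ is an equivalence onto $\add M$ by Lemma~\ref{equivalence}, it reflects isomorphisms, and therefore the isomorphism of images produced above lifts to an isomorphism of the original objects in $\Rep\Gamma^d_k$. The main obstacle I anticipate is the membership $\Gamma^\lambda\otimes_{\Gamma^d_k}\Gamma^\mu\in\add\Gamma$, but this is immediate from the external tensor product computation already carried out inside the proof of Theorem~\ref{mainresult}.
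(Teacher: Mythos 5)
Your proposal is correct and follows essentially the same route as the paper: establish that $\Gamma^\lambda\otimes_{\Gamma^d_k}\Gamma^\mu$ lies in $\add\Gamma$ via the decomposition of $\Gamma^{d,k^n}\otimes_{\Gamma^d_k}\Gamma^{d,k^m}\cong\Gamma^{d,k^{nm}}$, compute its image under $\F$ using the monoidality and Lemma~\ref{tensorpermmod}, and lift the resulting isomorphism through the equivalence of Lemma~\ref{equivalence}. The only cosmetic quibble is that what you need from the equivalence is that it is fully faithful, hence reflects isomorphy of objects ($\F(X)\cong\F(Y)$ implies $X\cong Y$), which is exactly how the paper concludes.
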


\begin{proof}
 We have that
 \[\bigoplus_{\nu\in\Lambda(n\cdot m, d)}\Gamma^\nu=\Gamma^{d,k^{n\cdot m}}\cong\Gamma^{d,k^n}\otimes_{\Gamma^d_k}\Gamma^{d,k^m}=\left(\bigoplus_{\lambda\in\Lambda(n,d)}\Gamma^\lambda\right)\otimes_{\Gamma^d_k}\left(\bigoplus_{\mu\in\Lambda(m,d)}\Gamma^\mu\right)\cong\bigoplus_{\substack{\lambda\in\Lambda(n,d)\\\mu\in\Lambda(m,d)}}\Gamma^\lambda\otimes_{\Gamma^d_k}\Gamma^\mu\]
 and thus $\Gamma^\lambda\otimes_{\Gamma^d_k}\Gamma^\mu$ belongs to $\add\Gamma$. The equivalence in  Lemma \ref{equivalence} yields the stated decomposition.
\end{proof}

\section{Relation to symmetric functions in characteristic 0}
Assume that $k$ is a field of characteristic $0$. In this case, the categories $\Rep\Gamma^d_k$ and $ k\frS_d\Mod$ are semisimple
and the functor $\F=\Hom_{\Gamma^d_k}(\Gamma^\omega,-)$ induces an equivalence. The simple objects in $\Rep\Gamma^d_k$ are given by the so called Schur functors and are 
mapped via $\F$ to the Specht modules i.e., the simple $k\frS_d$ modules. Since $ k\frS_d\Mod$ is semisimple we can identify every simple module with
its character. As Macdonald explains in \cite{Macd1995} there is an isometric isomorphism from the ring of irreducible characters of $k\frS_d$, for all $d\geq 0$, to the ring of symmetric functions.
We will explain the various correspondences.

\subsection{Ring of symmetric functions}
Following \cite{Macd1995} denote by $\Lambda$ the $\bbZ$-graded ring of symmetric functions. We recall various definitions and refer to \cite{Macd1995} for more details.
For any sequence of natural numbers $\alpha=(\alpha_1,\alpha_2,\dots,\alpha_n)$ the \emph{monomial symmetric functions} are defined by
\[m_\alpha(x_1,\dots,x_n):=\sum_{\beta\sim\alpha} x^\beta=\sum_{\beta\sim\alpha} x_1^{\beta_1}\cdots x_n^{\beta^n}\]
where the sum is taken over all different permutations $\beta$ of $\alpha$. The distinct monomial symmetric functions form a $\bbZ$-basis of $\Lambda$. There are many other bases,
in our context some of them  are of special interest. We describe them briefly.
 
\begin{itemize}
\item[\bf Elementary symmetric functions] For any natural number $n$ define the $n$-th elementary symmetric function \[e_n:=m_{(1^n)}\]
For any set of variables this is the sum of all products of $n$ distinct variables, i.e.  \[e_n(x)=\sum_{i_1<i_2<\dots<i_n} x_{i_1}\dots x_{i_n}.\]
If $ \lambda=(\lambda_1, \dots, \lambda_n) $ is any sequence of natural numbers, set $ e_\lambda:=e_{\lambda_1}\cdots e_{\lambda_n} $.
An ordered composition of $d$, i.e. $\lambda=(\lambda_1, \dots, \lambda_n)$ with $\lambda_1 \geq \dots\geq \lambda_n \geq 0$ is usually called a \emph{partition} of $d$.
The set $ \{ e_{\lambda } \ | \ \lambda \mbox{ partition of } d \} $ is a basis for the symmetric functions of degree $d$. If we consider all partitions of all non-negative integers $d$ we
obtain a $\bbZ$-basis of the ring $\Lambda$. 
\item[\bf Complete symmetric functions] For any natural number $n$ define the $n$-th complete symmetric function
\[h_n:=\sum_{|\alpha|=n} m_\alpha.\]
For any set of variables this is the sum of all monomials of total degree $n$, i.e.  \[h_n(x)=\sum_{|\alpha|=n}~\sum_{\beta\sim\alpha} x_1^{\beta_1}\cdots x_n^{\beta^n},\]
where the second sum is taken over all distinct permutations of $\alpha$. Again, the set of $h_\lambda:=h_{\lambda_1}\cdots h_{\lambda_n}, $ for all partitions $\lambda$, forms a $\bbZ$-basis of the ring $\Lambda$.
\item[\bf Schur functions] With $\lambda'\in\Lambda(n',d)$ the conjugate partition of $\lambda\in\Lambda(n,d)$,  the Schur function $s_\lambda$ can be defined as follows:
\[s_\lambda:=\det(h_{\lambda_i-i+j})_{1\leq i,j\leq n}=\det(e_{\lambda'_i-i+j})_{1\leq i,j\leq n'}.\] The set of Schur functions corresponding to all partitions is another possible $\bbZ$-basis of the ring $\Lambda$.
\item[\bf Power sum] For any natural number $n$ define the $n$-th power sum
\[p_n:=m_{(r)}.\] Then, $ \{ p_\lambda:=p_{\lambda_1}\cdots p_{\lambda_n} \ | \ \lambda \mbox{ a partition} \}$ is a $\bbQ$-basis of $\Lambda_\bbQ=\Lambda\otimes_{\bbZ}\bbQ$. 
\end{itemize}

\paragraph*{Relations between symmetric functions}
There are various relations between the symmetric functions defined above given by the so called Kostka numbers, cf. \cite[6.~Table~I]{Macd1995}. The most interesting ones for our purpose are
\begin{align}\label{symmfunc} h_\lambda=\sum_{\mu\in\Lambda(n,d)}K_{\mu\lambda}s_\mu\qquad h_{(d)}=h_d=s_{(d)},  \end{align}
where  $K_{\lambda\mu}$ is the number of tableaux of shape $\lambda$ and content $\mu$. In particular $K_{\lambda\mu}=1$ if $\lambda=\mu$ and $K_{\lambda\mu}\neq 0$ if and only if $\lambda\geq\mu$ \ with respect to
the dominance order.
                                                                                                                  
\paragraph*{Scalar product} One can define a scalar product on $\Lambda$ by requiring that the bases $\{h_\lambda\}$ and $\{m_\lambda\}$ should be dual to each other, i.e.
\[\left<h_\lambda,m_\mu\right>=\delta_{\lambda\mu}.\]
This implies that
\[\left<s_\lambda,s_\mu\right>=\delta_{\lambda\mu}.\]

\subsection{Representations of the symmetric group}
For a field of characteristic $0$, the category $k\frS_d\Mod $ is semisimple.  The simple modules are given by the Specht modules defined as follows:
For a partition $\lambda$ of $d$, let $\i_\lambda$ be the $d$-tuple where $1$ occurs $\lambda_1$ times, $2$ occurs $\lambda_2$ times and so on, i.e.
$\i_\lambda'=(12\dots n~\dots~1\dots n-1~\dots)\in\lambda$.
\begin{example}
Let $\lambda=(4,3,3,1)\in\Lambda(4,11)$. Then $\i_\lambda'=(1234~123~123~1)$.
\end{example}
This yields the element $e_{\i_\lambda'}=e_1\otimes e_2\otimes\dots \otimes e_n\otimes e_1\otimes e_2\dots\in {^\lambda M}$. 
Take the Young subgroup $\frS_{\lambda'}=\frS_{\lambda_1'}\times\dots\times \frS_{\lambda_n'}\subset \frS_d$
and define $v_\lambda:=\sum_{\sigma\in\frS_{\lambda'}}\sgn(\sigma)\sigma e_{\i_\lambda'}$. 
Then the Specht module $\Sp(\lambda)\subset M^\lambda$ is generated by $v_\lambda$, i.e.
\[\Sp(\lambda)= k\frS_d\,v_\lambda.\]
\begin{remark}
 If one regards the elements of ${^\lambda M}$ as tabloids defined by tableaux with filling $\{1,\dots,d\}$, one can define an element $v_T$ for every tableau $T$.
 Then a $k$-basis of $\Sp(\lambda)$ is given by those $v_T$ coming
 from a standard tableau, i.e. where the entries are strictly increasing along each row and each column (cf. {\cite[7.2~Proposition~2]{Fu1997}}).
\end{remark}
The relation between the permutation modules and the Specht modules is also given via Kostka numbers. Since $k\frS_d$ is semisimple, each module can be decomposed
into simple ones. Hence, every permutation module is the sum of some Specht modules, namely
\begin{align}\label{decomposition} {^\mu M}=\bigoplus_{\lambda\in\Lambda(n,d)}K_{\lambda\mu}\Sp(\lambda).\end{align}
 In particular
\[{^{(d)}M}=\Sp((d)).\]

Let $R^d$ be the free $\bbZ$-module generated by the irreducible characters of $\frS_d$. One can define a graded ring structure on $R=\bigoplus_{d\geq 0}R^d$ where the
multiplication is given by inducing the character $\phi\times\psi$ of $\frS_d\times\frS_e$ to a character of $\frS_{d+e}$. In terms of modules this implies that
\[{^\lambda M}={^{\lambda_1}M}\otimes\cdots\otimes {^{\lambda_n}M}=\Sp(\lambda_1)\otimes\cdots\otimes\Sp(\lambda_n).\]

A scalar product is given by the usual scalar product of functions on a group, namely 
\[\left<\phi,\psi\right>_{\frS_d}=\frac{1}{d!}\sum_{\pi\in\frS_d}\phi(\pi)\psi(\pi^{-1}).\]

\paragraph*{Tensor product}
The tensor product  of two Specht modules can be described in terms of so called Kronecker coefficients $g^\nu_{\lambda\mu}$:
\[\Sp(\lambda)\otimes_k \Sp(\mu)\cong\bigoplus_{\nu\in\Lambda(n,d)}g^\nu_{\lambda\mu}\Sp(\nu)\]

\subsection{Strict polynomial functors}
The simple objects of $\Rep\Gamma^d_k$  are given by the Schur functors $S_\lambda$ defined in \cite[II.1]{ABW1982} (c.f. also \cite{Kr2013})
as follows: Let $\sigma_\lambda$ be the permutation of $\frS_d$ defined on $r=\lambda_1+\cdots+\lambda_{i-1}+j$ by
\[\sigma(r)=\sigma_\lambda(\lambda_1+\cdots+\lambda_{i-1}+j)=\lambda_1'+\cdots+\lambda_{j-1}'+i\]
where $\lambda'$ is the partition conjugate to $\lambda$. Note that every $r\in\{1,\dots,d\}$ can be written uniquely as $r=\lambda_1+\cdots+\lambda_{i-1}+j$ for some $i$ and $j$.

This permutation defines a map $s_\lambda\colon V^{\otimes d}\rightarrow V^{\otimes d}$ as follows:
\[s_\lambda(v_1\otimes\cdots\otimes v_d)=v_{\sigma_\lambda(1)}\otimes\cdots\otimes v_{\sigma_\lambda(d)}.\]

Denote by $\Lambda^dV$ the $d$-th exterior power of the module $V$. It is obtained from the $d$-th fold tensor product $V^{\otimes d}$ by taking the quotient with respect to the submodule
spanned by elements of the form $v \otimes v$, with $v \in V$. Similarly denote by $S^d V$ the symmetric power, obtained by taking the maximal quotient of $V^{\otimes d}$ on which $\frS_d$ acts trivially.

For $V\in\P_k$ the Schur module $S_\lambda V$ is defined as the image of the map
\[\Lambda^{\lambda'_1}V\otimes\cdots\otimes
\Lambda^{\lambda'_m}V\xrightarrow{\Delta\otimes\cdots\otimes\Delta} V^{\otimes d}\xrightarrow{s_{\lambda}}V^{\otimes d}\xrightarrow{\nabla\otimes\cdots\otimes\nabla}
S^{\lambda_1}V\otimes\cdots\otimes S^{\lambda_n}V,\]
where $\Delta$ resp. $\nabla$ is the inclusion resp. projection.

Note that $\Gamma^d=S_{(d)}$. In particular this means that we have
\[\Gamma^\lambda=S_{(\lambda_1)}\otimes\cdots\otimes S_{(\lambda_n)}.\]

In terms of strict polynomial functors, the decomposition (\ref{decomposition}) becomes
\[\Gamma^{\lambda} = \bigoplus_{\nu\in\Lambda(n,d)} K_{\mu \lambda} {S}_{\mu}.\]

\paragraph*{Tensor product}
The functor $\F$ sends the Schur functor $S_\lambda$ to the Specht module $\Sp(\lambda)$ (cf. \cite[6]{Green}), hence the tensor product of Schur functors is again given by the Kronecker coefficients:
\[{S}_{\lambda} \otimes_{\Gamma^d_k} {S}_{\mu} \cong \bigoplus_{\nu\in\Lambda(n,d)} g_{\lambda \mu}^{\nu} {S}_{\nu}\]

\subsection{From representations of the symmetric group to symmetric functions}
The characters of the symmetric group and the symmetric functions can be linked via the following \textit{characteristic map} (cf. \cite[I.~7]{Macd1995}):
\begin{align*}
\ch\colon R&\rightarrow \Lambda\\
\psi&\mapsto \left<\psi,\phi\right>_{\frS_d}
\end{align*}
where $\psi\in R^d$ and $\phi$ is the map sending an element $\pi\in\frS_d$ to $p_{\lambda_\pi}$ where $\lambda_\pi$ is the cycle-type of $\pi$.
\begin{proposition}[{\cite[I.~(7.~3)]{Macd1995}}] \label{characteristic}
The characteristic map is an isometric isomorphism of $R$ onto $\Lambda$.
\end{proposition}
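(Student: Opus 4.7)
The plan is to break the statement into three independent assertions: that $\ch$ is a homomorphism of graded rings, that it is an isometry with respect to the prescribed scalar products, and that it is a bijection. I would attempt them in roughly that order, since multiplicativity is what makes the rest of the argument possible to organize inductively.

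First I would verify that $\ch$ is a homomorphism of graded abelian groups essentially by inspection, and then focus on multiplicativity. The product on $R$ is induction along $\frS_d\times\frS_e\hookrightarrow\frS_{d+e}$, so by Frobenius reciprocity one needs to compute
\[\ch\bigl(\mathrm{Ind}_{\frS_d\times\frS_e}^{\frS_{d+e}}(\psi_1\boxtimes\psi_2)\bigr)=\frac{1}{(d+e)!}\sum_{\pi\in\frS_{d+e}}\mathrm{Ind}(\psi_1\boxtimes\psi_2)(\pi)\,p_{\lambda_\pi}.\]
Using the standard coset formula for an induced character and the fact that the cycle type of a permutation in $\frS_d\times\frS_e$ is the concatenation of its two component cycle types (so $p_{\lambda_{\pi_1}}p_{\lambda_{\pi_2}}=p_{\lambda_{(\pi_1,\pi_2)}}$), this collapses to $\ch(\psi_1)\ch(\psi_2)$. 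This is the step I expect to be the main obstacle: organizing the double sum over cosets and conjugacy classes cleanly enough to recognize it as a product of two averages.

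Next I would prove the isometry. The natural basis of $R^d$ adapted to $\ch$ is the family $\{f_\mu\}_{\mu\vdash d}$ of class indicators, for which a direct computation gives $\ch(f_\mu)=p_\mu/z_\mu$, where $z_\mu$ is the size of the centralizer of any element of cycle type $\mu$. Under the scalar product on $R^d$ one has $\langle f_\lambda,f_\mu\rangle_{\frS_d}=\delta_{\lambda\mu}/z_\mu$, while Macdonald's formula yields $\langle p_\lambda,p_\mu\rangle=z_\mu\delta_{\lambda\mu}$; these match exactly after dividing by $z_\mu^2$, so the isometry holds on this basis and, by bilinearity, in general.

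Finally I would establish bijectivity via the permutation modules. A direct character count shows that the character of ${}^\mu M=\mathrm{Ind}_{\frS_\mu}^{\frS_d}(\mathbf{1})$ is sent to the complete symmetric function $h_\mu$. Combining the decomposition ${}^\mu M=\bigoplus_\lambda K_{\lambda\mu}\Sp(\lambda)$ of \eqref{decomposition} with the expansion $h_\mu=\sum_\lambda K_{\lambda\mu}s_\lambda$ from \eqref{symmfunc}, and using the unitriangularity of the Kostka matrix in dominance order together with $K_{\lambda\lambda}=1$, one inverts the relation and concludes that the irreducible character $\chi^\lambda$ is mapped to the Schur function $s_\lambda$. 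Since $\{\chi^\lambda\}$ is a $\bbZ$-basis of $R$ and $\{s_\lambda\}$ is a $\bbZ$-basis of $\Lambda$, this gives the desired isomorphism and recovers the isometry on the orthonormal bases $\langle\chi^\lambda,\chi^\mu\rangle=\delta_{\lambda\mu}=\langle s_\lambda,s_\mu\rangle$.
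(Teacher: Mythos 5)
Your argument is correct, and it is essentially the proof of Macdonald~[I.(7.3)], which the paper does not reproduce but simply cites. Your three steps --- multiplicativity via Frobenius reciprocity and concatenation of cycle types, the isometry via the class indicators $f_\mu$ with $\ch(f_\mu)=p_\mu/z_\mu$ (these lie only in $R\otimes_{\bbZ}\bbQ$, which is harmless since the forms are bilinear), and the identification $\ch(\chi^\lambda)=s_\lambda$ from the permutation characters $h_\mu$ together with unitriangularity of the Kostka matrix --- are exactly the standard ones in the cited source.
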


If we denote by $[V]$ the character of the $k\frS_d$-module $V$, one gets for the Specht modules $\ch([\Sp(\lambda)])=s_{\lambda}$, in particular $\ch([\Sp((n))])=h_n$. Hence, for the character of the permutation module ${^\lambda M}$ we get
\[\ch([{^\lambda M}])=\ch([\Sp((\lambda_1))]\cdot \ldots \cdot[\Sp((\lambda_n))])=h_{\lambda_1}\cdot\ldots\cdot h_{\lambda_n}=h_\lambda.\]
  
\paragraph*{Kronecker product}
Using the characteristic map, one can define an internal product, sometimes called Kronecker product, via the internal tensor product of modules over the symmetric group.
For two symmetric functions $f=\ch(\phi)$ and $g=\ch(\psi)$ define 
\[f\ast g=\ch(\phi\cdot\psi).\]

\begin{example}
Since we know how to decompose the tensor product of two permutation modules, we can compute the Kronecker product of two complete symmetric functions:
\[h_{\lambda} \ast h_{\mu} = \ch([{^{\lambda}M}]\cdot[{^\mu}M])= \ch([{^\lambda M}\otimes_k {^\mu M}])=\ch([\bigoplus_{A\in A^\lambda_\mu}{^AM}])=\sum_{A \in A^{\lambda}_{\mu}} h_A\]
\end{example}

\subsection{From strict polynomial functors directly to symmetric functions}
There is an alternative description of the characteristic map, going directly from strict polynomial functors to symmetric functions.
Let $  \mathfrak{F} = \bigoplus_{d \geq 0} \Rep\Gamma_k^d$ be the category of strict polynomial functors of bounded degree.
Using the tensor product of the symmetric group modules, that corresponds for polynomial functors to the external tensor product, 
as defined in (\ref{exttensor}), one defines a product on the Grothendieck group $K(\mathfrak{F})$, which gives it the structure of a commutative, associative, graded ring with identity (cf. \cite{Macd1995}, Appendix~A) .

For $a=(a_1, \dots, a_n) \in k^n$, denote by $\diag(a)$ the diagonal endomorphism of $k^n$ with eigenvalues $(a_1, \dots, a_n)$. 
If $X$ is a polynomial functor, the trace of $X(\diag(a))$ is a polynomial function of $(a_1, \dots, a_n)$, which is symmetric. This determines a 
homomorphism of graded rings   \[\chi \colon K(\mathfrak{F})\rightarrow \Lambda, \] by \ $\chi(X)(a_1, \dots, a_n)=\trace X(\diag(a))$.

If we observe that $K(\Rep\Gamma_k^d) \cong K(\Mod k\frS_d) \cong R^d$ we may identify $K(\mathfrak{F})$ with $R$. Under this identification, the map $\chi$ coincides with $\ch$.
Hence we get (cf. also \cite[Appendix A]{Macd1995})
\[\chi({S}_{\lambda}) =s_{\lambda}.\]

Furthermore we can reobtain the correspondence between the strict polynomial functors $\Gamma^\lambda$ and the symmetric functions $h_\lambda$ by direct calculations:

We have \ $\Gamma^d(\diag(a)) = \diag(a) \otimes \dots \otimes \diag(a) = \diag(a)^{\otimes d}$, \ hence \ $\chi(\Gamma^d)(a_1, \dots, a_n)=\trace (\diag(a)^{\otimes d}) = (\trace(\diag(a)))^d
=(a_1 + \dots + a_n)^d$.

Recall that \[ (a_1 + \dots + a_n)^d= \sum \left( 
    \begin{array}{c}
      d \\
      m_1,\dots,m_n
    \end{array} \right)  a_1^{m_1}  \dots  a_n^{m_n}\] where the sum is taken over all compositions $(m_1, \dots, m_n)$ of $d$ and the coefficient of 
  $a_1^{m_1}  \dots  a_n^{m_n}$ \ equals \ $\frac{d!}{m_1! \dots m_n!}$. \ If we observe that this coefficient gives the number of permutations that 
  fix the partition $(m_1, \dots, m_n)$, we can rewrite the sum as 
  \[\sum_{|\lambda|=d} ~ \sum_{\beta\sim\lambda} a_1^{\beta_1}\cdots a_n^{\beta^n} = \sum_{|\lambda|=d} m_{\lambda}(a_1, \dots, a_n) = h_{\lambda}(a_1, \dots, a_n).\]
It follows that $\chi(\Gamma^d)=h_d$.
\bigskip  

From Theorem \ref{mainresult} and Proposition \ref{characteristic} we get
\begin{Corollary}
 The characteristic map $\chi$ sends the internal tensor product of strict polynomial functors to the Kronecker product of symmetric functions, i.e.
  \[ \chi(X \otimes_{\Gamma^d_k} Y) = \chi(X) \ast \chi(Y). \]
\end{Corollary}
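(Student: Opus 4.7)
The plan is to reduce the claim to the already-established monoidality of $\F$ (Theorem~\ref{mainresult}) together with the identification $\chi = \ch$ under the isomorphism $K(\mathfrak{F}) \cong R$ recalled just above the corollary. First I would pass from strict polynomial functors to $k\frS_d$-modules: Theorem~\ref{mainresult} furnishes a natural isomorphism
\[\F(X \otimes_{\Gamma^d_k} Y) \;\cong\; \F(X) \otimes_k \F(Y)\]
of $k\frS_d$-modules. Taking classes in $K(\Mod k\frS_d) \cong R^d$, this becomes an equality $[\F(X\otimes_{\Gamma^d_k}Y)] = [\F(X)] \cdot [\F(Y)]$, where the right-hand product is the internal product of characters of $\frS_d$ coming from the diagonal Hopf structure on $k\frS_d$.

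Next I would apply $\ch$. The Kronecker product on $\Lambda$ was defined (in the section above) precisely so that $\ch(\phi \cdot \psi) = \ch(\phi) \ast \ch(\psi)$ for $\phi,\psi \in R^d$. Combining this with the previous display gives
\[\ch\bigl([\F(X \otimes_{\Gamma^d_k} Y)]\bigr) \;=\; \ch([\F(X)]) \ast \ch([\F(Y)]).\]
Finally I would invoke the identification $K(\Rep\Gamma^d_k) \cong K(\Mod k\frS_d) \cong R^d$: this is induced by $\F$ (an equivalence in characteristic $0$), and under it $\chi$ is identified with $\ch$, as recorded in the text preceding the corollary. Hence the left-hand side is $\chi(X \otimes_{\Gamma^d_k} Y)$ and the right-hand side is $\chi(X) \ast \chi(Y)$, yielding the claim.

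There is no real obstacle: the entire content sits in Theorem~\ref{mainresult} and in the definitional choice by which the Kronecker product on $\Lambda$ transports the internal tensor product on $\frS_d$-modules. The only point requiring care is to verify that the ring structure used on $K(\mathfrak{F})$ in the identification with $R$ agrees, in each fixed degree $d$, with the internal tensor product on $\Rep\Gamma^d_k$; this is immediate since both sides are defined componentwise from the internal tensor product of $k\frS_d$-modules under $\F$.
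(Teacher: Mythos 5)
Your proposal is correct and follows essentially the same route as the paper, which derives the corollary directly from Theorem \ref{mainresult} together with Proposition \ref{characteristic}, the definition of the Kronecker product via $\ch$, and the identification of $\chi$ with $\ch$ under $K(\mathfrak{F})\cong R$; your write-up just makes these steps explicit.
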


In the following table we collect some of the correspondences we have shown before.
\begin{table}[ht]\centering \setlength{\tabcolsep}{8mm}
\begin{tabular}{ccc}\toprule
$\Rep\Gamma^d_k$&$k\frS_d \Mod $&$\Lambda$\\\midrule
$S_{(d)}=\Gamma^d$&$\Sp((d))={^{(d)}M}$&$s_d=h_{(d)}$\\\addlinespace[1ex]
$S_\lambda$&$\Sp(\lambda)$&$s_\lambda$\\\addlinespace[1ex]
$\Gamma^\lambda$&${^\lambda M}$&$h_\lambda$\\\addlinespace[1ex]
$\Gamma^{d,k^n}=\bigoplus\Gamma^\lambda$&$(k^n)^{\otimes d}=\bigoplus {^\lambda M}$&$\bigoplus h_\lambda$\\\addlinespace[1ex]
$X\otimes_{\Gamma^d_k}Y$&$\F(X)\otimes_{k}\F(Y)$&$\chi(X)\ast\chi(Y)$\\\bottomrule
\end{tabular}
\end{table}



\end{document}